\newtheorem{theorem}{Theorem}[section]
\newtheorem{lemma}{Lemma}[section]
\newtheorem{remark}{Remark}[section]
\newtheorem{definition}{Definition}[section]
\newtheorem{example}{Example}[section]
\begin{document}

\setcounter{page}{1}
\begin{flushleft}
{\scriptsize }
\end{flushleft}
\bigskip
\bigskip

\title[\centerline{On $\Phi$-Schauder Frames
\hspace{0.5cm}}] {On $\Phi$-Schauder Frames}

\author[\hspace{0.7cm}\centerline{TWMS J. App. Eng.
Math. V..., N..., 2012}] {L.K.Vashisht}

\thanks {\noindent  Department of Mathematics, University of Delhi, Delhi, India.\\
\indent \,\,\, e-mail: lalitkvashisht@gmail.com\\
\indent \S \, Manuscript received Day Month 2011.\\
\indent \,\,\, TWMS Journal of Applied and Engineering Mathematics
Vol.1 No.2 \copyright\ I\c s\i k University, Department \\
\indent \,\,\, of Mathematics 2011; all rights reserved.}


\bigskip
\begin{abstract}
 In this short  note we introduce and study a particular type of Schauder frames, namely,  $\Phi$-Schauder frames.
\bigskip

\noindent Keywords: Frames,  retro Banach frames, Schauder frames.

\bigskip
\noindent AMS Subject Classification:  42C15, 42C30, 46B15.
\end{abstract}

\maketitle
\bigskip


\section{Introduction  and Preliminaries}

Frames for Hilbert spaces were introduced by Duffin and Schaeffer
\cite{6} in 1952, while addressing some deep problems in
non-harmonic Fourier series. Later, in 1986, Daubechies, Grossmann
and Meyer \cite{5} found new applications to wavelets and Gabor
transforms in which frames played an important role.\

Today, frames play important roles in many applications in
mathematics, science and engineering . In particular frames are
widely used in sampling theory, wavelet theory, wireless
communication, signal processing, image processing , differential
equations, filter banks, geophysics, quantum computing, wireless
sensor network, multiple-antenna code design and many more.  Reason
is that frames provides both great  liberties in the design of
vector space decompositions, as well as quantitative measure on the
computability and robustness of the corresponding reconstructions.
In the theoretical direction, powerful tools from operator theory
and Banach spaces are being employed to study frames. For a nice and
comprehensive survey on various types of frames,  one may refer to
\cite{1,4,10} and the references therein.

Coifman and Weiss \cite{3} introduced the notion of atomic decomposition for function spaces. Later ,
 Feichtinger and Gr\"ochenig \cite{7} extended this idea to Banach spaces. This concept was further
 generalized by Gr\"ochenig \cite{8} who introduced the notion of Banach frames for Banach spaces.
 Casazza, Han and Larson \cite{2} also carried out a study of atomic decompositions and Banach frames.
 Recently, various generalization of frames in Banach spaces have been introduced and studied.
 Han and Larson \cite{9} introduced Schauder frames for Banach spaces.
 Schauder frames were further studied in \cite{13,14}. The notion of retro Banach frames in  Banach spaces
 introduced and studied in \cite{12}. \\

Throughout this note  $E$ will denote an infinite dimensional Banach
space and  $E^*$ the conjugate space of $E$.

\begin{definition}[\cite{9}]
A pair $(x_n,f_n)$  $(\{x_n\}\subset E$,  $\{f_n\}\subset E^*)$ is
called a $\emph{Schauder frame}$ for $E$ if
\begin{align}
x=\sum_{n=1}^\infty f_n(x)x_n,\quad\text{for all} \ x\in E,
\end{align}
where the series converges in the norm topology of $E$.
\end{definition}

\begin{definition}[\cite{12}]
Let $E$ be a Banach space and  let $E_d$  be an associated Banach
space of scalar-valued sequences indexed by $\mathbb{N}$. Let
$\{x_n\}\subset E$ and $T:(E^*)_d\to E^*$ be given. The pair
$(\{x_n\},T)$ is called a \emph{retro Banach frame} for $E^*$ with
respect to $(E^*)_d$ if :
\begin{enumerate}
\item[{(i)}] $\{f(x_n)\}\in (E^*)_d$, for all $f\in E^*$.
\item[{(ii)}] There exist positive constants $A$ and $B$ with $0<A\le B<\infty$ such that
\begin{align}
A\|f\|_{E^*}\le \|\{f(x_n)\}\|_{(E^*)_d}\le
B\|f\|_{E^*},\quad\text{for all } \ f\in E^*.
\end{align}
\item[{(iii)}] $T$ is a bounded linear operator such that $T(\{f(x_n)\})=f$, for all $f\in E^*$.
\end{enumerate}
The positive constants $A$ and $B$ are called  the \emph{lower} and
\emph{upper frame bounds}, of the retro Banach frame $(\{x_n\},T)$,
respectively. The operator $T:(E^*)_d\to E^*$ is called the
\emph{reconstruction operator} (or the \emph{pre-frame operator})
and the inequality (2) is called the \emph{retro frame inequality}.
\end{definition}

\begin{lemma} Let $E$ be a Banach space and $\{f_n\}\subset E^*$ be a
 sequence such that \break $\{x\in E:f_n(x)=0$, for \ all \ n $\in
 \mathbb{N}\}$=$\{0\}$. Then $E$ is linearly isometric to the \break Banach space
 $X=\{\{f_n(x)\}:x\in E\}$, where the norm is given by
$\|\{f_n(x)\}\|_X= \|{x}\|_E$, $x\in E$.
\end{lemma}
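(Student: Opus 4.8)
The plan is to exhibit the natural coordinate map as the desired isometry. I would define $\phi:E\to X$ by $\phi(x)=\{f_n(x)\}$ for $x\in E$. By the very definition of $X=\{\{f_n(x)\}:x\in E\}$, the map $\phi$ takes values in $X$ and is surjective, so the only things left to verify are that $\phi$ is a well-defined linear bijection and that it preserves the norm.

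First I would address what is really the crux of the argument, namely that the norm $\|\cdot\|_X$ is genuinely well defined. The formula $\|\{f_n(x)\}\|_X=\|x\|_E$ assigns a value to a sequence in $X$ by referring to a pre-image $x$, so one must check this value does not depend on the choice of pre-image. Suppose $\{f_n(x)\}=\{f_n(y)\}$; then $f_n(x-y)=0$ for every $n\in\mathbb{N}$, and the hypothesis $\{x\in E:f_n(x)=0 \text{ for all } n\}=\{0\}$ forces $x-y=0$, i.e.\ $x=y$. Hence each element of $X$ has a unique pre-image, so $\|\cdot\|_X$ is unambiguous and, by the same computation, $\phi$ is injective.

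Next I would dispose of the routine structural points. Since each $f_n$ is linear, $\{f_n(x)\}+\{f_n(y)\}=\{f_n(x+y)\}$ and $\lambda\{f_n(x)\}=\{f_n(\lambda x)\}$, so $X$ is a linear subspace of the space of scalar-valued sequences and $\phi$ is linear. That $\|\cdot\|_X$ satisfies the norm axioms is immediate by transporting them from $\|\cdot\|_E$ through the bijection just established between $E$ and $X$; in particular the identity $\|\phi(x)\|_X=\|x\|_E$ is exactly the statement that $\phi$ is an isometry.

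Finally, to justify calling $X$ a Banach space I would note that completeness is inherited: a Cauchy sequence in $X$ pulls back under the isometry $\phi^{-1}$ to a Cauchy sequence in the complete space $E$, whose limit maps forward to a limit in $X$. Thus $\phi$ is a linear, bijective, norm-preserving map between Banach spaces, which is precisely the asserted linear isometry. The only genuinely nontrivial step is the well-definedness of the norm, and it is exactly there that the separation hypothesis on $\{f_n\}$ is used; everything else is formal.
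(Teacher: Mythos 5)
Your proof is correct and complete: the paper states this lemma without including a proof (it is a standard result in the retro Banach frame literature, cf.\ \cite{12}), and your argument is precisely the canonical one. You correctly identify the only nontrivial point --- that the separation hypothesis $\{x\in E: f_n(x)=0 \text{ for all } n\in\mathbb{N}\}=\{0\}$ makes the coordinate map $x\mapsto\{f_n(x)\}$ injective, hence makes the norm $\|\{f_n(x)\}\|_X=\|x\|_E$ well defined --- and the remaining steps (linearity, surjectivity onto $X$ by definition, and completeness of $X$ transported from $E$ through the isometry) are handled exactly as they should be.
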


In this note  we introduce and study a particular type of Schauder
frames, namely,  $\Phi$-Schauder frames.   Necessary and / or
sufficient conditions for a  Schauder frame  to be   $\Phi$-Schauder
frame have  been obtained.

\section{ $\Phi$-Schauder frames}

\begin{definition}A Schauder frame $(x_n,f_n)$ $\{x_n\}\subset E, \{f_n\}\subset E^*)$
for a Banach space $E$
 is said to be   \emph{  $\Phi$-Schauder frame }
 if $\inf\limits_{1\le n<\infty}\|f_n\|>0$ and  there exists a functional
$\Phi \in E^*$ such that $\Phi(x_n)=1$, for all $n \in \mathbb{N}.$

\end{definition}

The functional $\Phi$ is called the \emph{associated functional} of
the Schauder frame $(x_n,f_n)$.

To show existence of $\Phi$- Schauder frames , we have following
example.

\begin{example}
Let $E=l^1$ and  $\{e_n\}\subset E$ be the sequence of canonical
unit vectors.
\begin{enumerate}
\item[(a)] Let $\{x_n\}\subset E$, $\{f_n\}\subset E^*$ be sequences defined by
\begin{align*}
\left.\begin{array}{ll}
x_1=\frac{1}{2}e_1,\qquad\quad x_2= \frac{1}{2}e_1,&x_n=e_{n-1},\\[4pt]
f_1(x)=\xi_{1},\quad f_2(x)=\xi_{1},&f_n(x)=\xi_{n-1},\quad
x=\{\xi_n\}\in E
\end{array}\!\!\right\}, \ n=3,4,5,....
\end{align*}
Then $(x_n,f_n)$, is a Schauder frame for $E$ with
$\inf\limits_{1\le n<\infty}\|f_n\|>0$.\ Now\break
\mbox{$\Phi=(2,1,1,1,1,\ldots)\in E^*$} is such that $\Phi(x_n)=1$,
for all $n\in \mathbb{N}$. Hence  $(x_n,f_n)$ is a $\Phi$-Schauder
frame of
 for $E$.

\item[{(b)}]
Let $\{x_n\}\subset E$, $\{f_n\}\subset E^*$ be sequences defined by
\begin{align*}
\left.\begin{array}{ll}
x_1=e_1,  \  x_{n-1}=e_{n-1}\\[4pt]
f_1(x)=0,  \   f_{n-1}(x)=\xi_{n-1}, \quad x=\{\xi_n\}\in E
\end{array}\!\!\right\}, \ n=2,3,4.....
\end{align*}
Then, $(x_n,f_n)$ is a Schauder frame for $E$. But
$\inf\limits_{1\le n<\infty}\|f_n\|=0$. Thus,  $(x_n,f_n)$ is not a
$\Phi$-Schauder frame for $E$.
\end{enumerate}
\end{example}

\begin{remark}
A  $\Phi$-Schauder frame $(x_n,f_n)$   for a Banach space $E$ also
depends on $E$. Indeed,  the pair $(x_n,f_n)$ in Example 2.1(a) is a
$\Phi$-Schauder frame for $l^1$ but not for $c_0$.

\end{remark}

The following theorem gives sufficient conditions for a   Schauder
frame to be $\Phi$-Schauder frame.

\begin{theorem} Let $(x_n,f_n)$ $(\{x_n\}\subset E, \{f_n\}\subset E^*)$ be a Schauder frame
for a Banach space $E$ with  $\inf\limits_{1\le n<\infty}\|f_n\|>0$
and $z_0$  a given non- zero vector in $E$. If there exists no
reconstruction operator $W$ such that $(\{x_n+z_0\},W)$ is a retro
Banach frame for $E^*$ , then  $(x_n,f_n)$ is $\Phi$-Schauder frame.
\end{theorem}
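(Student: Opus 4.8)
The plan is to run Lemma 1.1 in contrapositive form and then read off the associated functional. Write $y_n = x_n + z_0$. First I would apply Lemma 1.1 with $E^*$ in the role of the ambient Banach space and the evaluation functionals $f \mapsto f(y_n)$ (each $y_n$ lives in $E \subset E^{**}$) in the role of $\{f_n\}$. The lemma then says: if
\[
\{f \in E^* : f(y_n) = 0 \ \text{for all} \ n \in \mathbb{N}\} = \{0\},
\]
then $E^*$ is linearly isometric to $X = \{\{f(y_n)\} : f \in E^*\}$ equipped with $\|\{f(y_n)\}\|_X = \|f\|_{E^*}$. Granting this hypothesis, I would build a retro Banach frame $(\{y_n\}, W)$ for $E^*$ by choosing $(E^*)_d = X$: condition (i) is immediate, condition (ii) holds with $A = B = 1$ by the isometry, and the reconstruction operator $W : X \to E^*$, $W(\{f(y_n)\}) = f$, is well defined by injectivity, isometric (hence bounded), and satisfies (iii).

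Consequently the standing hypothesis — that no reconstruction operator makes $(\{x_n+z_0\}, W)$ a retro Banach frame — forces the displayed totality condition to fail. Thus there is a nonzero $f_0 \in E^*$ with $f_0(y_n) = 0$ for every $n$, that is, $f_0(x_n) = -f_0(z_0) =: c$ independent of $n$. Next I would exclude $c = 0$: were $c = 0$, then $f_0(x_n) = 0$ for all $n$, and applying the continuous functional $f_0$ to the norm-convergent Schauder expansion $x = \sum_{n} f_n(x) x_n$ would give $f_0(x) = \sum_n f_n(x) f_0(x_n) = 0$ for every $x \in E$, forcing $f_0 = 0$, a contradiction. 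Hence $c \neq 0$, and setting $\Phi = c^{-1} f_0 \in E^*$ yields $\Phi(x_n) = 1$ for all $n$. Together with the given $\inf_{1 \le n < \infty} \|f_n\| > 0$, this verifies both clauses of Definition 2.1, so $(x_n, f_n)$ is a $\Phi$-Schauder frame.

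The step I expect to carry the real weight is the first one: recognizing that Lemma 1.1 is precisely the tool that converts the negated retro-frame hypothesis into the existence of a nonzero annihilating functional $f_0$, and that it must be applied to the dual pairing between $E^*$ and $E \subset E^{**}$ rather than to $E$ directly. Once the totality condition is seen to fail, producing $\Phi$ is a short scalar normalization, and the only point needing care is the nonvanishing of $c$, which is handled by feeding $f_0$ through the frame expansion.
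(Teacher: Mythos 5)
Your proposal is correct and takes essentially the same route as the paper: the paper likewise invokes Lemma 1.1 (with $E^*$ as the ambient space and the vectors $x_n+z_0$ acting as functionals on it) to extract a nonzero $\phi\in E^*$ with $\phi(x_n+z_0)=0$ for all $n$, rules out $\phi(z_0)=0$ via the Schauder expansion, and normalizes $\Phi=-\phi/\phi(z_0)$, which is exactly your $\Phi=c^{-1}f_0$ with $c=-f_0(z_0)$. Your explicit construction of the retro Banach frame from the isometry and your verification that $c\neq 0$ simply fill in details the paper leaves implicit.
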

\proof If there exists no reconstruction operator $W$ such that
$(\{x_n+z_0\},W)$  is  a retro Banach frame for $E^*$, then by Lemma
$1.1$ there exists a non-zero functional $\phi$ in $E^*$ such that
$\phi(x_n+z_0)=0$, for all $n\in \mathbb{N}$. Since $(x_n,f_n)$ is
Schauder frame for $E$, so $\phi(x_k)\ne0$ for some $k$. Thus,
$\phi(z_0)\ne0$. Put $\Phi=-\frac{\phi}{\phi(z_0)}$. Then, $\Phi$ is
a functional in $E^*$ such that $\Phi(x_n)=1$, for all $n\in
\mathbb{N}$. Hence $(x_n,f_n)$ is a $\Phi$-Schauder frame .

\endproof

\begin{remark}
The conditions in  Theorem $2.1$  are  not necessary. Indeed, let
$(x_n,f_n)$ be the $\Phi$-Schauder frame for $E=l^1$ given in
Example $2.1(a)$ and $z_0=-e_1$. Then,  by Lemma $1.1$ there exists
a reconstruction operator $W:(E^*)_d=\{\{f(x_n+z_0)\}:f\in E^*\}\to
E^*$  such that $(\{x_n+z_0\},W)$ is a retro Banach frame for $E^*$
with bounds $A=B=1$.
\end{remark}

\begin{remark} The conditions in  Theorem $2.1$ turns out to be
 necessary provided  \break $\Phi(z_0)=-1$. Under this condition  there exists no
 reconstruction operator $\Theta_0$ such that $(\{x_n+z_0\},  \Theta_0)$
 is a retro Banach frame for $E^*$. Since otherwise by  retro frame inequality for
  $(\{x_n+z_0\},\Theta_0)$ and using  $\Phi(x_n+z_0)=0$, for all $n\in \mathbb{N}$,
  we obtain $\Phi=0$, a \break contradiction.\\
 Thus, we have  following theorem.
\end{remark}

\begin{theorem}
Let $(x_n,f_n)$ be  a $\Phi$-Schauder frame  for a Banach space $E$
and that  $z_0$ be a non zero vector in $E$ such that
$\Phi(z_0)=-1$. Then, there exists no reconstruction operator
$\Theta_0$ such that $(\{x_n+z_0\},  \Theta_0)$ is a retro Banach
frame for $E^*$.

\end{theorem}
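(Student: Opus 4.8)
The plan is to argue by contradiction, extracting a contradiction directly from the lower bound in the retro frame inequality. The whole argument rests on a single computation together with a judicious choice of test functional.

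First I would compute the action of the associated functional $\Phi$ on the perturbed vectors. Using linearity together with the defining property $\Phi(x_n)=1$ of a $\Phi$-Schauder frame and the hypothesis $\Phi(z_0)=-1$, I obtain
\[
\Phi(x_n+z_0)=\Phi(x_n)+\Phi(z_0)=1+(-1)=0,\qquad n\in\mathbb{N}.
\]
This is the crucial observation: the associated functional annihilates every perturbed frame vector $x_n+z_0$.

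Next I would suppose, toward a contradiction, that a bounded reconstruction operator $\Theta_0$ exists making $(\{x_n+z_0\},\Theta_0)$ a retro Banach frame for $E^*$ with respect to some associated sequence space $(E^*)_d$. By Definition $1.2$, condition (ii) gives a positive lower frame bound $A$ such that $A\|f\|_{E^*}\le\|\{f(x_n+z_0)\}\|_{(E^*)_d}$ for all $f\in E^*$. I would then apply this inequality to the single functional $f=\Phi$. Since $\Phi(x_n+z_0)=0$ for every $n$, the sequence $\{\Phi(x_n+z_0)\}$ is the zero sequence, whence its $(E^*)_d$-norm is $0$; the lower inequality collapses to $A\|\Phi\|_{E^*}\le 0$, and as $A>0$ this forces $\Phi=0$.

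Finally I would observe that $\Phi=0$ is impossible for a $\Phi$-Schauder frame, because $\Phi(x_n)=1\neq 0$ shows $\Phi$ is a nonzero functional. This contradiction establishes that no such $\Theta_0$ can exist. I do not anticipate any genuine obstacle: the argument is a one-line application of the lower retro frame inequality to the test functional $\Phi$, exactly as anticipated in Remark $2.3$. The only point requiring a word of care is confirming $\Phi\neq 0$, which is immediate from $\Phi(x_n)=1$.
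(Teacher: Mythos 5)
Your proposal is correct and is essentially identical to the paper's own argument, which appears in Remark 2.3: apply the lower retro frame inequality to the test functional $\Phi$, use $\Phi(x_n+z_0)=\Phi(x_n)+\Phi(z_0)=0$ to force $\Phi=0$, and contradict $\Phi(x_n)=1$. Your only addition is to spell out explicitly why $\Phi\neq 0$, which the paper leaves implicit.
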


The following theorem shows that if $E$ and $F$ are Banach spaces
having \break $\Phi$-Schauder frames , then their product space
$E\times F$  with a suitable norm also has a $\Phi$-Schauder frame .

\begin{theorem}
Let $(x_n,f_n)$ $(\{x_n\}\subset E, \{f_n\}\subset E^*)$ and
$(y_n,g_n)$ $(\{y_n\}\subset F,\{g_n\}\subset F^*)$ be Schauder
frames for Banach spaces $E$ and $F$, respectively. Then there exist
\break  sequences $\{z_n\}\subset E\times F$  and $\{h_n\}\subset
(E\times F)^*$ such that $(z_n,h_n)$ is a Schauder frame for \break
$E\times F$. Furthermore, if $(x_n,f_n)$ and $(y_n,g_n)$ are
$\Phi$-Schauder frames, then $(z_n,h_n)$ is also a $\Phi$-Schauder
frame.
\end{theorem}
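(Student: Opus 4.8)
The plan is to build the frame for $E\times F$ by interleaving the two given frames. I would equip $E\times F$ with the norm $\|(u,v)\|=\max\{\|u\|_E,\|v\|_F\}$ (the sum norm works equally well) and set, for $n\in\mathbb{N}$,
\[
z_{2n-1}=(x_n,0),\qquad z_{2n}=(0,y_n),
\]
together with the functionals
\[
h_{2n-1}(u,v)=f_n(u),\qquad h_{2n}(u,v)=g_n(v),\qquad (u,v)\in E\times F.
\]
Each $h_k$ is linear, and boundedness is immediate from $|h_{2n-1}(u,v)|=|f_n(u)|\le\|f_n\|\,\|u\|_E\le\|f_n\|\,\|(u,v)\|$ and the analogous estimate for $h_{2n}$, so that $\{h_k\}\subset(E\times F)^*$.

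First I would verify that $(z_n,h_n)$ is a Schauder frame for $E\times F$. For fixed $(u,v)$ the even partial sum is
\[
S_{2N}=\Big(\sum_{n=1}^N f_n(u)x_n,\ \sum_{n=1}^N g_n(v)y_n\Big),
\]
which converges to $(u,v)$ as $N\to\infty$, since $(x_n,f_n)$ and $(y_n,g_n)$ are Schauder frames for $E$ and $F$ respectively. The only point requiring care is the odd partial sums: one has $S_{2N-1}=S_{2N}-g_N(v)(0,y_N)$, and because the series $\sum_n g_n(v)y_n$ converges its general term $g_N(v)y_N$ tends to $0$, whence $S_{2N-1}\to(u,v)$ as well. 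Therefore $\sum_{k=1}^\infty h_k(u,v)z_k=(u,v)$ for every $(u,v)\in E\times F$, which is exactly the reconstruction formula.

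It remains to treat the $\Phi$-property under the additional hypothesis. With the chosen norm one checks directly that $\|h_{2n-1}\|=\|f_n\|_{E^*}$ and $\|h_{2n}\|=\|g_n\|_{F^*}$, so that
\[
\inf_{1\le k<\infty}\|h_k\|=\min\Big\{\inf_n\|f_n\|,\ \inf_n\|g_n\|\Big\}>0,
\]
the positivity coming from the fact that both $(x_n,f_n)$ and $(y_n,g_n)$ are $\Phi$-Schauder frames. Next, let $\Phi\in E^*$ and $\Psi\in F^*$ be their associated functionals, so that $\Phi(x_n)=1$ and $\Psi(y_n)=1$ for all $n$. Defining $\Omega\in(E\times F)^*$ by $\Omega(u,v)=\Phi(u)+\Psi(v)$ gives a bounded functional with $\Omega(z_{2n-1})=\Phi(x_n)=1$ and $\Omega(z_{2n})=\Psi(y_n)=1$, i.e. $\Omega(z_k)=1$ for every $k$. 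Hence $(z_n,h_n)$ is a $\Phi$-Schauder frame for $E\times F$ with associated functional $\Omega$.

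I expect the main obstacle to be organizational rather than conceptual: the delicate part is the convergence of the interleaved series, where the odd-indexed partial sums cannot be compared with a tail of either original frame directly and must instead be controlled through the vanishing of the general term of the convergent series $\sum_n g_n(v)y_n$. Once the norm on $E\times F$ is fixed, the computation of $\|h_k\|$ and the construction of $\Omega$ reduce to routine estimates.
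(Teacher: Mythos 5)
Your proposal is correct and follows essentially the same route as the paper: the identical interleaving $z_{2n-1}=(x_n,0)$, $z_{2n}=(0,y_n)$ with $h_{2n-1}(u,v)=f_n(u)$, $h_{2n}(u,v)=g_n(v)$, and the functional $\Omega(u,v)=\Phi(u)+\Psi(v)$ implementing what the paper merely asserts ``by nature of construction.'' In fact you are more careful than the paper, which checks convergence only along even partial sums; your observation that $S_{2N-1}\to(u,v)$ because the general term $g_N(v)y_N$ of a convergent series tends to $0$ fills a genuine gap in the published argument.
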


\begin{proof}
Let $\{z_n\}\subset E\times F$ and $\{h_n\}\subset (E\times F)^*$ be
sequences defined by
\begin{align*}
&\begin{cases}
z_{2n}=(0,y_n),\\
z_{2n-1}=(x_n,0)
\end{cases}\quad n\in \mathbb{N},\\
&\begin{cases}
h_{2n}(x,y)=g_n(y),\\
h_{2n-1}(x,y)=f_n(x)
\end{cases}\quad n\in \mathbb{N.
}
\end{align*}
Then
\begin{align*}
(x,y)
&=\bigg(\sum_{n=1}^\infty f_n(x)x_n,\sum_{n=1}^\infty g_n(y)y_n\bigg)\\
&=\sum_{n=1}^\infty h_n(x,y)z_n,\quad\text{for all } \ (x,y)\in
E\times F.
\end{align*}
Hence $(z_n,h_n)$ is a Schauder frame for $E\times F$.

Further, suppose $(x_n,f_n)$ and $(y_n,g_n)$ are $\Phi$-Schauder
fames. Then, by nature of \break construction of $(z_n,h_n)$, there
exists a functional $\Phi_0 \in  (E\times F)^*$  such that
$\Phi_0(z_n)=1$, for all $n\in \mathbb{N}$ and
 $\inf\limits_{1\le n<\infty}\|h_n\|>0$. Therefore, $(z_n,h_n)$ is $\Phi_0$-Schauder frame.
\end{proof}

\section{ Concluding remarks }
Let $(x_n,f_n)$ be  a Schauder frame for $E$. Then,  there exist a
reconstruction   operator  $\Theta_0$ such that $(\{x_n\},\Theta_0)$
is a retro Banach frame for $E^*$. We say that  $(\{x_n\},
\Theta_0)$ is an  \emph{associated retro Banach frame} of the
Schauder frame $(x_n,f_n)$.\\
 Recently, retro Banach frames of type
$P$ for Banach spaces introduced and  studied in [15]: A retro
Banach frame $(\{x_n\},T)$ for  $E^*$ is said to be of \emph{type
$P$} if it is exact   and there  exists a functional $\Psi\in E^*$  such that $\Psi(x_n)=1$,
for all $n\in \mathbb{N}$.\\

Conclusion:  For a  given  $\Phi$-Schauder frame of $E$,   its
associated retro Banach frame need not be of type $P$  and
vice-versa. This is given in the form of remarks.

\begin{remark}
Let $(x_n,f_n)$ be  a $\Phi$-Schauder frame  for $E$. Then,
associated retro Banach frame   $(\{x_n\},\Theta_0)$, in general,
not of type $P$. Indeed, let  $(x_n,f_n)$  $(\{x_n\}\subset E,
\{f_n\}\subset E^*)$ be  a system for   $E=l^1$ given in Example
$2.1(a)$. Then, $(x_n,f_n)$  is   a $\Phi$-Schauder frame  for $E$
but the associated retro Banach frame  $(\{x_n\},\Theta_0)$ is not
of type $P$.
\end{remark}

\begin{remark} Let  $(\{x_n\}, \Theta)$  be  a retro Banach frame of type $P$ for $E$.
Then, in ganeral, $E$ has no Schauder frame. Furthermore,  if $E$
 admit an associated  Schauder frame , say  $(x_n,f_n)$. Then,
 in general, $(x_n,f_n)$ is not a $\Phi$-Schauder frame  for $E$.
 Indeed, let $E=l^1$ and  $\{x_n\}\subset E$, $\{f_n\}\subset E^*$ be sequences defined by

\item
\begin{align*}
\left.\begin{array}{ll}
x_n=ne_n,\\[4pt]
f_n(x)=\frac{1}{n}\xi_n,\quad x=\{\xi_n\}\in E
\end{array}\!\!\right\} ,\ n=1,2,3,......
\end{align*}

Then, $(x_n,f_n)$ is a Schauder frame for $E$.  Also , there exists
a reconstruction operator $\Theta_0$ such that $(\{x_n\},\Theta_0)$
is a retro Banach frame [associated retro Banach frame of the
Schauder frame  $(x_n,f_n)$]  for $E^*$ which is of type $P$. But
$(x_n,f_n)$ is not  a $\Phi$-Schauder frame  for $E$.
\end{remark}

\textbf{Acknowledgement:} I wish to thank Prof. S. K. Kaushik for
fruitful  discussions.
\bigskip

\mbox{}

\begin{thebibliography}{10}


\bibitem{1} Casazza, P.G., (2000),
The art of frame theory, \emph{Taiwanese J. Math.}, {4}(2) ,
129--201.

\bibitem{2}  Casazza, P.G., Han, D.  and Larson, D.R., (1999),
Frames for Banach spaces, \emph{Contemp. Math.}, {247} , 149--182.

\bibitem{3} Coifman, R.R. and Weiss, G., (1977),
Exensions of Hardy spaces and their use in analysis, \emph{Bull.
Amer. Math. Soc.}, {83} , 569--645.

\bibitem{4} Christensen, O., (2008),  Frames and bases (An introductory course), Birkh\"auser, Boston .




\bibitem{5} Daubechies, I., Grossmann, A.  and Meyer, Y., (1986)
Painless non-orthogonal expansions, \emph{J. Math. Phys.}, {27} ,
1271--1283.

\bibitem{6} Duffin, R.J.  and Schaeffer, A.C., (1952),
A class of non-harmonic Fourier series, \emph{Trans. Amer. Math.
Soc.}, {72} , 341--366.

\bibitem{7} Feichtinger, H.G.  and Gr\"ochenig, K., (1988),
A unified approach to atomic decompositons via inegrable group
representations, \emph{Lecture Notes in Mathematics}, {1302}
(Springer, Berlin),  52--73.

\bibitem{8} Gr\"ochenig, K., (1991),
Describing functions: Atomic decompositions versus frames,
\emph{Monatsh. Math.}, {112},  1--41.

\bibitem{9} Han, D. and Larson, D.R. , (2000),
Frames, bases and group representations, \emph{Mem. Amer. Math.
Soc.}, {147} (697) , 1--91.



\bibitem{10}Heil, C., (1998),
A basis theory primer, Birkh\"auser (expanded edition).


\bibitem{11}
Heuser, H., (1982), Functional Analysis, John Wiley and Sons, New
York .

\bibitem{12}
Jain, P.K., Kaushik, S.K.  and Vashisht, L.K., (2004), Banach frames
for conjugate Banach spaces, \emph{Zeit. Anal. Anwendungen}, {23}
(4) , 713-720.


\bibitem{13}
Liu, R.  and Zheng, B., (2010),  A characterization of Schauder
frames which are near-Schauder bases, \emph{J.~Fourier Anal. Appl.},
{16} , 791--803.

\bibitem{14}
 Liu, R., (2010), On shrinking and boundedly complete Schauder frames of Banach
spaces, \emph{ J. Math. Anal. Appl.}, {365} , 385--398.


\bibitem{15}
 Vashisht, L.K., (2012) On retro Banach frames of type $P$, \emph{Azerbaijan J. Math.}, {2} (1) , 82-89.



\end{thebibliography}
\end{document}